\documentclass[12pt,a4paper,oneside,notitlepage]{article}

\usepackage[T1]{fontenc}
\usepackage[cp1250]{inputenc}
\usepackage{amsfonts, amsmath, amssymb, amsthm}
\usepackage[english]{babel}
\usepackage[bookmarks,colorlinks]{hyperref}
\sloppy
\pagestyle{plain}

\title{On nonlocal perturbations of integral kernels
\footnotetext{
2010 MSC: 47A55, 60J35, 47D08. Keywords: forward kernel, nonlocal perturbation, fundamental solution.\\
The research was partially supported by grant MNiSW N N201 397137.}
}
\author{Krzysztof Bogdan
\footnote{Institute of Mathematics of the
Polish Academy of Sciences and 
Institute of Mathematics and Computer Science, Wroc{\l}aw University of Technology, Wybrze\.ze Wyspia\'nskiego 27, 50-370 Wroc{\l}aw, Poland,
bogdan@pwr.wroc.pl}
 \and Sebastian Sydor\footnote{Institute of Mathematics, University of Wroc{\l}aw, pl. Grunwaldzki 2/4, 50-384 Wroc{\l}aw, Poland, Sebastian.Sydor@math.uni.wroc.pl}}
\date{\today}

\newtheorem{theorem}{Theorem}[section]
\newtheorem{lemma}[theorem]{Lemma}

\newtheorem{corollary}[theorem]{Corollary}

\def \al {\alpha}

\def \si {\sigma}

\def \bbR {{\mathbb R}}
\def \bbN {{\mathbb N}}
\def \cM {{\cal M}}
\def \cB {{\cal B}}
\def \cE {{\cal E}}

\def \intl {\int\limits}
\def \iintl {\iint\limits}
\def \t {\widetilde}

\def \kK {{K}}
\def \kk {{k}}
\def \ka {{\kappa}}

\begin{document}
\maketitle
\begin{abstract}
We give sufficient conditions for a nonlocal perturbation of an integral kernel to be locally in time comparable with the kernel. 
\end{abstract}

\section{Introduction and Preliminaries}
We may delete or add jumps to a Markov process by adding a nonlocal operator to its generator.
We shall be concerned with estimates of the resulting, perturbed transition kernels.
In fact, we consider similar perturbations of rather general integral kernels on space-time. 
We focus on perturbations by non-local operators, which model evolution of mass in presence of births, deaths, dislocations and delays.
We are motivated by recent estimates of local, or Schr\"odinger, perturbations of integral kernels in \cite{MR3000465, MR3065313}, and nonlocal perturbations of the Green functions in \cite{MR2417435} and \cite{MR2316878}.

We deal with the so-called {\it forward} kernels, reflecting directionality of time.
The resulting perturbation and the original kernel turn out to be comparable locally in time and globally in space under an appropriate integral smallness condition on the first term of the perturbation series.
A related paper \cite{2011arXiv1112.3401C} studies nonlocal perturbations of the semigroup of the fractional Laplacian  and related discontinuous multiplicative and additive functionals, which offer a probabilistic counterpart of our approach. 
We emphasize that transition and potential kernels of Markov processes are our main motivation for this work, however in what follows we do not generally impose Chapman-Kolmogorov condition on the kernels. 

The paper is composed as follows. In Section~\ref{sec:mr} we formulate our main estimates: Theorem~\ref{t1} for kernels and Theorem~\ref{t2} and \ref{t3} for  kernel densities.
In Section~\ref{sec:tk} we note that nonlocal perturbations of transition kernels are transition kernels, too. In Section~\ref{sec:sp} we briefly mention signed perturbations and give lower bounds for negative perturbations of transition kernels.
In Section~\ref{sec:a} we indicate the extra work that needs to be done in order to verify our condition on the smallness of the first term of the perturbation series and we apply our results in specific situations. We focus on perturbations of the transition density of the fractional Laplacian, describe the perturbations in terms of generators and fundamental solutions and we illustrate the effect the nonlocal perturbations have on jump intensity of stochastic processes.

We note that Theorems~\ref{t1}, \ref{t2} and \ref{t3} generalize the main estimates of \cite{MR3000465} for Schr\"o\-din\-ger perturbations of integral kernels. 
The reader may find  in \cite{MR3000465} and a related paper \cite{MR3065313} some general comments on this research program, and more applications, e.g. to Weyl fractional integrals (\cite[Example~3]{MR3000465}) or to the potential kernel of the vector of two independent $1/2$-stable subordinators \cite[Example~4.1]{MR3065313}.

Considering transition probabilities, it should be noted that the perturbations considered in the present paper and \cite{2011arXiv1112.3401C} generally produce non-probabilistic kernels as they may increase the mass of the kernel. To preserve the mass, the generator of the perturbation should be of L\'evy-type; it should involve compensation, and annihilate constant functions. There is a considerable progress in construction and estimates of transition probabilities resulting from such operators. We refer the reader to
recent papers \cite{2013arXiv1306.5015Z}, \cite{2013arXiv1307.3087K} and \cite{2012arXiv1209.4096K},
whose techniques are close to perturbation methods, but  require
specific smoothness assumptions on the transition kernels.

\section{Main results}\label{sec:mr}
We first recall, after \cite{MR939365}, some properties of kernels.
Let $(E,\cE)$ be a measurable space. A kernel on $E$ is a map $K$ from $E\times\cE$ to $[0,\infty]$ such that
\begin{itemize}
\item[] $x\mapsto K(x,A)$ is $\cE$-measurable for all $A\in\cE$, and
\item[] $A\mapsto K(x,A)$ is countably additive for all $x\in E$.
\end{itemize}
\noindent Consider kernels $K$ and $J$ on $E$. The map
$$
(x,A)\mapsto \intl_E K(x,dy)J(y,A)
$$
from $(E\times\cE)$ to $[0,\infty]$ is another kernel on $E$, called the {\it composition} of $K$ and $J$, and denoted $KJ$. Here and below we alternatively write $\int f(x) \mu(dx)=\int \mu(dx)f(x)$.
We let $\kK _n=K_{n-1}JK(s,x,A)=(\kK J)^n\kK$, $n=0,1,\ldots$. The composition of kernels is associative, which yields the following lemma.
\begin{lemma}\label{l1}
$\kK _n=\kK _{n-1-m}J \kK _m$ for all $n\in\bbN$ and $m=0,1,\ldots,n-1$.
\end{lemma}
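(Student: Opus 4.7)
My plan is to verify the identity by direct computation, using only the closed-form expression $\kK_n=(\kK J)^n\kK$ given in the definition together with the associativity of kernel composition that was just noted.

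First I would expand each factor on the right-hand side using the closed form: $\kK_{n-1-m}=(\kK J)^{n-1-m}\kK$ and $\kK_m=(\kK J)^m \kK$. Plugging in, I get
$$
\kK_{n-1-m}\, J\, \kK_m \;=\; (\kK J)^{n-1-m}\kK \cdot J \cdot (\kK J)^m \kK .
$$
Since composition of kernels is associative, I may drop the parentheses and reassemble the adjacent factors $\kK\cdot J$ into a single $(\kK J)$ block, yielding
$$
(\kK J)^{n-1-m}\,(\kK J)\,(\kK J)^m\,\kK \;=\; (\kK J)^{n}\kK \;=\; \kK_n ,
$$
which is the required identity. The constraint $m\le n-1$ is exactly what ensures that the exponent $n-1-m$ is nonnegative, so that $\kK_{n-1-m}$ is well-defined.

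The only non-obvious step is the invocation of associativity, and this is precisely what the author flagged in the sentence preceding the lemma; there is no real obstacle. If one preferred a slightly more pedestrian presentation, one could instead fix $n$ and induct on $m$: the base case $m=0$ is the very definition $\kK_n=\kK_{n-1}J\kK$, and the inductive step rewrites $\kK_{n-1-m}=\kK_{n-2-m}J\kK$ and then uses associativity to regroup as $\kK_{n-2-m}J(\kK J \kK_m)=\kK_{n-2-m}J\kK_{m+1}$. Either route gives the claim in a few lines.
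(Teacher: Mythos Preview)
Your argument is correct and matches the paper's approach exactly: the paper does not spell out a proof at all, merely noting that associativity of kernel composition ``yields'' the lemma, which is precisely what your expansion $(\kK J)^{n-1-m}\kK\, J\,(\kK J)^m\kK=(\kK J)^n\kK$ makes explicit.
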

We define the {\it perturbation}, $\t{K}$, of ${K}$ by $J$, via the {\it perturbation series},
\begin{equation}\label{eq:deftk}
\t{K}=\sum_{n=0}^\infty \kK_n=\sum_{n=0}^\infty(KJ)^nK.
\end{equation}
Of course, $K\leq \t{K}$, and the following {\it perturbation formula} holds,
\begin{equation}\label{eq:pfdeftk}
\t{K}=K+\t{K}JK.
\end{equation}
Below we  prove upper bounds for $\t{K}$ under additional conditions on $K$, $J$ and $K_1=KJK$. 

Consider a set $X$ (the state space) with $\si$-algebra $\cM$, the real line  $\bbR$ (the time) equipped with the Borel sets $\cB_\bbR$, and consider the space-time
$$E:=\bbR \times X,$$ 
with the product $\si$-algebra ${\cE}=\cB_{\bbR}\times \cM$. 
Let $\eta \in [0,\infty)$ and a function $Q: \bbR \times \bbR \rightarrow [0, \infty)$ satisfy the following condition of {\it super-additivity}:
\begin{equation*}\label{a2}
Q(u,r)+Q(r,v)\leq Q(u,v)\quad \mbox{for all }u<r<v.
\end{equation*}
In particular, $Q(r,v)\leq Q(u,v)$ for $r\le u\le v$.
 Let $J$ be another kernel on $E$. We assume that  $K$ and $J$ are {\it forward} kernels, i.e. for $A\in \cE ,\; s\in \bbR, \, x\in X$,
\begin{equation*}\label{kernel}
\kK (s,x,A)=J(s,x,A)=0 \mbox{ whenever } A\subseteq(-\infty,s]\times X.
\end{equation*}
For $r<t$ we consider the strip $S=(r,t]\times X$, and the restriction of $K$ to $S$, to wit,
$K(s,x,A)$, where $(r,x)\in S$ and $A\subset S$.
We note that the restriction of $KJ$ to $S$ depends only on the restrictions of $K$ and $J$.
In fact we could consider $E=(r,t)\times X$ as our basic setting. This observation allows to localize our estimates in time.

In what follows we study consequences of the following assumption,
\begin{equation}\label{estJ_1}
KJK(s,x,A)\leq \intl_A [\eta + Q(s,t)]K(s,x,dtdy).
\end{equation}
\begin{theorem}\label{t1}
Assuming
\eqref{estJ_1},  for all $n=1,2,\ldots$, and $(s,x)\in E$, we have
\begin{align}
\kK _n(s,x,dtdy)&\leq \kK _{n-1}(s,x,dtdy)\left[\eta + \frac{Q(s,t)}{n}\right]\label{estJ_n}\\
&\leq \kK (s,x,dtdy)\prod_{l=1}^{n}\left[\eta + \frac{Q(s,t)}{l}\right].\label{e2}
\end{align}
If $0<\eta <1$, then for all $(s,x)\in E$,
\begin{equation}\label{eq:metK}
\t{\kK} (s,x,dtdy)\leq \kK (s,x,dtdy){\left(\frac{1}{1-\eta}\right)}^{1+Q(s,t)/\eta}.
\end{equation}
If $\eta =0$, then for all $(s,x)\in E$,
\begin{equation}\label{e4}
\t{\kK}(s,x,dtdy)\leq\kK (s,x,dtdy) e^{Q(s,t)}.
\end{equation}
\end{theorem}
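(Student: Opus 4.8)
The proof proceeds by induction on $n$ to establish \eqref{estJ_n}. The base case $n=1$ is exactly assumption \eqref{estJ_1}. For the inductive step, I would write $\kK_n = \kK_{n-1}J\kK$ (Lemma~\ref{l1} with $m=0$), no — rather I would use the splitting $\kK_n = \kK_{n-2}J\kK_1 = \kK_{n-2}J(\kK_1)$ together with a second representation $\kK_n = \kK_{n-1}J\kK$, and combine them cleverly. More precisely, the natural route is: apply \eqref{estJ_1} to the innermost $\kK J\kK$ block appearing in $\kK_n = (\kK J)^{n-1}\kK J\kK$, which gives $\kK_n(s,x,dtdy) \le (\kK J)^{n-1}$ applied to $[\eta + Q(\cdot,t)]\kK(\cdot,dtdy)$; then one must absorb the accumulated factor. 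The key combinatorial point is that integrating $Q(\cdot,t)$ against the intermediate kernels, using super-additivity $Q(u,r)+Q(r,v)\le Q(u,v)$, lets each of the $n$ ``$J$-insertions'' contribute a share of $Q(s,t)$, and averaging over which insertion carries the $Q$-term produces the factor $1/n$. I expect this averaging/symmetrization argument — showing that the sum over positions of the $Q$-contribution telescopes to $Q(s,t)$ by super-additivity, hence each position gets at most $Q(s,t)/n$ — to be the main obstacle.

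Granting \eqref{estJ_n}, inequality \eqref{e2} follows immediately by iterating: $\kK_n \le \kK_{n-1}[\eta + Q(s,t)/n] \le \kK_{n-2}[\eta+Q(s,t)/(n-1)][\eta+Q(s,t)/n] \le \cdots \le \kK \prod_{l=1}^n [\eta + Q(s,t)/l]$, since all factors are nonnegative and depend only on $(s,t)$, not on the intermediate integration variables. Here one should be slightly careful that $Q(s,t)$ with the \emph{outer} endpoints is what appears at every stage — this again uses $Q(r,v)\le Q(u,v)$ for $r\le u\le v$ to dominate the intermediate $Q$-values by $Q(s,t)$.

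For \eqref{eq:metK}, I would sum \eqref{e2} over $n\ge 0$ and estimate $\sum_{n=0}^\infty \prod_{l=1}^n[\eta + Q(s,t)/l]$. Writing $q=Q(s,t)$, one bounds $\prod_{l=1}^n(\eta + q/l) = \eta^n \prod_{l=1}^n(1+ q/(l\eta))$ and uses $1+x\le e^x$ to get $\prod_{l=1}^n(1+q/(l\eta)) \le \exp\big((q/\eta)\sum_{l=1}^n 1/l\big)$, but that diverges, so instead I would use the sharper comparison $\prod_{l=1}^n(1+q/(l\eta)) \le (n+ q/\eta)!/(n!\,(q/\eta)!)\cdot(\text{const})$ type bound, i.e. recognize the product as a ratio of Gamma functions, $\prod_{l=1}^n(1+ q/(l\eta)) = \binom{n+q/\eta}{n}$ up to the Gamma normalization, and then $\sum_n \eta^n \binom{n+q/\eta}{n} = (1-\eta)^{-1-q/\eta}$ is exactly the negative binomial series $\sum_{n\ge0}\binom{n+a}{n}\eta^n = (1-\eta)^{-a-1}$ with $a = q/\eta$. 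This gives \eqref{eq:metK} directly. Finally, \eqref{e4} is the $\eta\to 0^+$ limit: as $\eta\downarrow 0$, $(1-\eta)^{-1-q/\eta} = \exp\big((1+q/\eta)(\eta + \eta^2/2 + \cdots)\big) \to e^{q}$; alternatively, when $\eta=0$ directly, $\prod_{l=1}^n(0 + q/l) = q^n/n!$, so $\sum_{n\ge 0} q^n/n! = e^q = e^{Q(s,t)}$, which is the cleanest derivation and what I would present.
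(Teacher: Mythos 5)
Your derivations of \eqref{e2}, \eqref{eq:metK} and \eqref{e4} from \eqref{estJ_n} are correct and essentially identical to the paper's: the product $\prod_{l=1}^n[\eta+Q(s,t)/l]=\eta^n\binom{n+Q(s,t)/\eta}{n}$ summed by the negative binomial series is exactly the Pochhammer/Taylor expansion $(1-\eta)^{-a}=\sum_n\eta^n(a)_n/n!$ used in the paper, and the $\eta=0$ case is the exponential series. The genuine gap is in the only nontrivial part of the theorem, namely \eqref{estJ_n} for $n\ge 2$. You correctly sense that the factor $1/n$ must come from letting the $J$-insertions share $Q(s,t)$ via super-additivity, but you explicitly leave this ``averaging/symmetrization'' step as an expected obstacle rather than proving it. Moreover, the one concrete step you do propose --- applying \eqref{estJ_1} only to the innermost $KJK$ block of $K_n=(KJ)^{n-1}KJK$ --- cannot by itself produce the $1/n$: it only gives $K_n\le K_{n-1}[\eta+Q(s,t)]$, with which the perturbation series is controlled only when $\eta+Q(s,t)<1$, so the stated bounds \eqref{eq:metK} and \eqref{e4} would not follow.

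The missing argument is a two-term weighted induction, not a symmetrization over all $n$ positions. By Lemma~\ref{l1} with $m=0$ and $m=1$ one writes $(n+1)K_{n+1}=n\,K_nJK+K_{n-1}JK_1$. In the first term apply the induction hypothesis $K_n(s,x,dudz)\le[\eta+Q(s,u)/n]\,K_{n-1}(s,x,dudz)$; in the second apply \eqref{estJ_1} to the $K_1$ factor, $K_1(u_1,z_1,dtdy)\le[\eta+Q(u_1,t)]\,K(u_1,z_1,dtdy)$. The $\eta$-parts add up to $(n+1)\eta K_n(s,x,A)$. For the $Q$-parts, the forward property forces $s<u<u_1<t$ on the region of integration, so $Q(u_1,t)\le Q(u,t)$ and then $Q(s,u)+Q(u,t)\le Q(s,t)$ by super-additivity; recombining the resulting triple integrals via $K_n=K_{n-1}JK$ bounds their sum by $\int_A Q(s,t)\,K_n(s,x,dtdy)$. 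Altogether $(n+1)K_{n+1}(s,x,A)\le\int_A (n+1)\left[\eta+\frac{Q(s,t)}{n+1}\right]K_n(s,x,dtdy)$, which is \eqref{estJ_n} at rank $n+1$. This is precisely the step your proposal postpones; without it the theorem is not proved. (A minor point: once \eqref{estJ_n} is available, \eqref{e2} follows by direct iteration, since each stage already carries the outer endpoints $(s,t)$ --- no extra monotonicity of $Q$ is needed there.)
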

\begin{proof}
(\ref{estJ_1}) yields (\ref{estJ_n}) for $n=1$. By induction, for $n=1,2,\ldots$ we have
\begin{eqnarray*}
&&(n+1)K_{n+1}(s,x,A)= nK_nJK(s,x,A)+K_{n-1}JK_1(s,x,A)\\
&=& n\intl_E K_n(s,x,dudz)(JK)(u,z,A)+ \intl_E (K_{n-1}J)(s,x,du_1dz_1)K_{1}(u_1,z_1,A)\\
&\leq& n\intl_E \left[\eta +\frac{Q(s,u)}{n}\right]K_{n-1}(s,x,dudz)(JK)(u,z,A)\\
&&+\intl_E (K_{n-1}J)(s,x,du_1dz_1)\intl_A [\eta + Q(u_1,t)]K(u_1,z_1,dtdy)\\
&=&(n+1)\eta K_n(s,x,A)\\
&&+\intl_E Q(s,u)K_{n-1}(s,x,dudz)\intl_E J(u,z,du_1 dz_1)\intl_A K(u_{1},z_{1},dtdy)\\ 
&&+\intl_E\intl_{(u,\infty)\times X} K_{n-1}(s,x,dudz)J(u,z,du_1dz_1)\intl_A Q(u_1,t) K(u_1,z_1,dtdy)\\
&\leq&(n+1)\eta K_n(s,x,A)\\
&&+\intl_A\intl_E\intl_E Q(s,u)K_{n-1}(s,x,dudz) J(u,z,du_1 dz_1) K(u_{1},z_{1},dtdy)\\ 
&&+\intl_A\intl_E\intl_E K_{n-1}(s,x,dudz)J(u,z,du_1dz_1) Q(u,t) K(u_1,z_1,dtdy)
\end{eqnarray*}
\begin{eqnarray*}
&\leq&(n+1)\eta K_n(s,x,A)\\
&&+\intl_A Q(s,t)\intl_E K_{n-1}(s,x,dudz) \intl_E J(u,z,du_1dz_1)K(u_1,z_1,dtdy)\\
&=&(n+1)\eta K_n(s,x,A)+\intl_AQ(s,t)\intl_EK_{n-1}(s,x,dudz)(JK)(u,z,dtdy)\\
&=&(n+1)\intl_A\left[\eta + \frac{Q(s,t)}{n+1}\right]K_{n}(s,x,dtdy).
\end{eqnarray*}
(\ref{e2}) follows from (\ref{estJ_n}), (\ref{e4}) results from Taylor\rq{}s expansion of the exponential function, and (\ref{eq:metK}) follows from the Taylor series
$$
(1-\eta)^{-a}=\sum_{n=0}^\infty \frac{\eta^n(a)_n}{n!},
$$
where $0<\eta<1$, $a\in \bbR$, and $(a)_n=a(a+1)\cdots(a+n-1)$.
\end{proof}

\noindent Theorem~\ref{t1} has two {\it finer} or {\it pointwise} variants, which we shall state under suitable conditions.
Fix a (nonnegative) $\si$-finite, non-atomic measure $$dt=\mu(dt)$$ on $(\bbR,\cB_\bbR)$ and a function $\kk(s,x,t,A)\ge 0$ defined for $s,t\in \bbR$, $x\in X$, $A \in \cM$, such that 
$k(s,x,t,dy)dt$ is a forward kernel and $(s,x)\mapsto k(s,x,t,A)$ is jointly measurable for all $t\in \bbR$ and $A\in \cM$. 
Let $\kk_0=\kk$, and for $n=1,2,\ldots$,
\begin{equation*}
\kk_n(s,x,t,A)=\intl_s^t\intl_X \kk_{n-1}(s,x,u,dz)\intl_{(u,t)\times X } J(u,z,du_1dz_1)\kk(u_1,z_1,t,A)du.
\end{equation*}
The perturbation, $\t{k}$, of $k$ by $J$, is defined as
\begin{equation*}
\t{k}=\sum_{n=0}^{\infty}k_n.
\end{equation*}
Assume that
\begin{equation*}\label{a3}
\intl_s^t\intl_X \kk(s,x,u,dz)\intl_{(u,t)\times X } J(u,z,du_1dz_1)\kk(u_1,z_1,t,A)du \leq [\eta+Q(s,t)]\kk(s,x,t,A).
\end{equation*}
\begin{theorem}\label{t2}
Under the assumptions, for all $n=1,2,\ldots$, and $(s,x)\in E$,
\begin{align*}
\kk _n(s,x,t,dy)&\leq \kk _{n-1}(s,x,t,dy)\left[\eta + \frac{Q(s,t)}{n}\right]\\
&\leq \kk (s,x,t,dy)\prod_{l=1}^{n}\left[\eta + \frac{Q(s,t)}{l}\right].
\end{align*}
If $0<\eta <1$, then for all $(s,x)\in E$ and $t\in \bbR$ we have
\begin{equation*}
\t{\kk} (s,x,t,dy)\leq \kk (s,x,t,dy){\left(\frac{1}{1-\eta}\right)}^{1+Q(s,t)/\eta}.
\end{equation*}
If $\eta =0$, then 
\begin{equation*}
\t{\kk}(s,x,t,dy)\leq\kk (s,x,t,dy) e^{Q(s,t)}.
\end{equation*}
\end{theorem}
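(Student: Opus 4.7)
The strategy is to transcribe the proof of Theorem~\ref{t1} from the kernel setting to the density setting, with $K_n(s,x,dtdy)$ replaced by $k_n(s,x,t,dy)\,dt$. The hypothesis preceding the theorem is exactly the $n=1$ instance of the first inequality, so an induction on $n$ is available.

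First I would establish the density analogue of Lemma~\ref{l1}: for every $n\in\bbN$ and $m\in\{0,1,\ldots,n-1\}$,
\[
k_n(s,x,t,A)=\intl_s^t\intl_X k_{n-1-m}(s,x,u,dz)\intl_{(u,t)\times X}J(u,z,du_1dz_1)\,k_m(u_1,z_1,t,A)\,du.
\]
This is the associativity of the iterated ``$k$--$J$'' composition restricted to the strip $(s,t)\times X$; it follows by unfolding the recursive definition of $k_n$ and invoking Fubini, the forward property of $J$ ensuring that the nested time integrations fit together consistently.

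With this identity available, the inductive step is the same bookkeeping as in Theorem~\ref{t1}. Writing
\[
(n+1)k_{n+1}=nk_n J k+k_{n-1} J k_1
\]
(with the obvious abbreviation for the space-time integrations), I would apply the inductive hypothesis $k_n\leq[\eta+Q(s,u)/n]k_{n-1}$ in the first term and the theorem's assumption $k_1\leq[\eta+Q(u_1,t)]k$ in the second. The monotonicity $Q(u_1,t)\leq Q(u,t)$ for $u\leq u_1$ and the super-additivity $Q(s,u)+Q(u,t)\leq Q(s,t)$ then collapse the cross terms exactly as in the kernel case, giving
\[
(n+1)k_{n+1}(s,x,t,dy)\leq (n+1)\left[\eta+\frac{Q(s,t)}{n+1}\right]k_n(s,x,t,dy).
\]

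The only point that really needs attention is the Fubini step in the density version of Lemma~\ref{l1}; once that is in place, the recursion unfolds to the product bound, and the exponential bound for $\eta=0$ and the $(1-\eta)^{-1-Q(s,t)/\eta}$ bound for $0<\eta<1$ follow verbatim from the same Taylor expansions used at the end of the proof of Theorem~\ref{t1}.
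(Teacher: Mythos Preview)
Your proposal is correct and matches the paper's intent: the paper explicitly omits the proof of Theorem~\ref{t2}, stating only that it is similar to the proof of Theorem~\ref{t1}, and your outline is precisely that transcription---the same inductive decomposition $(n+1)k_{n+1}=nk_nJk+k_{n-1}Jk_1$, the same use of super-additivity of $Q$, and the same Taylor expansions for the summation. There is nothing to add.
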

We skip the proof, because it is similar to the proof of Theorem~\ref{t1}.

\noindent
For the {\it finest} variant of Theorem~\ref{t1}, we fix a 
$\si$-finite measure 
$$dz=m(dz)$$ 
on $(X, \cM)$.
We consider function $\ka(s,x,t,y)\geq 0$, $s,t\in \bbR$, $x,y\in X$, such that
$\ka(s,x,t,y)dtdy$ is a forward kernel and $(s,x)\mapsto k(s,x,t,y)$ is jointly measurable for all $t\in \bbR$ and $y\in X$.
We call such $\ka$ a (forward) {\it kernel density} (see \cite{MR3000465}).
We define $\ka_0(s,x,t,y) = \ka(s,x,t,y)$, and
$$
\ka_n(s,x,t,y) = \intl_s^t \intl_X \ka_{n-1}(s,x,u,z)\intl_{(u,t)\times X}J(u,z,du_1dz_1)\ka(u_1,z_1,t,y) \,dz\,du\,,
$$
where $n=1,2,\ldots$. Let $\t{\ka}=\sum_{n=0}^\infty \ka_n$. For all $s<t\in \bbR$, $x,y\in X$, we assume
\begin{equation*}
\!\intl_s^t\!\!\intl_X\!\!\ka(s,x,u,z)\!\!\!\intl_{(u,t)\times X}\!\!J(u,z,du_1dz_1)\ka(u_1,z_1,t,y)dz du\leq [\eta +Q(s,t)]\ka(s,x,t,y).
\end{equation*}
\begin{theorem}\label{t3}
Under the assumptions, for $n=1,2,\ldots$, $s<t$ and $x,y \in X$,
\begin{align*}
\ka _n(s,x,t,y)&\leq \ka _{n-1}(s,x,t,y)\left[\eta + \frac{Q(s,t)}{n}\right]\\
&\leq \ka (s,x,t,y)\prod_{l=1}^{n}\left[\eta + \frac{Q(s,t)}{l}\right].
\end{align*}
If $0<\eta <1$, then for all $s,t\in \bbR$ and $x,y \in X$, 
\begin{equation*}\label{eq:metgKpt}
\t{\ka} (s,x,t,y)\leq \ka (s,x,t,y){\left(\frac{1}{1-\eta}\right)}^{1+Q(s,t)/\eta}.
\end{equation*}
If $\eta =0$, then 
\begin{equation*}\label{eq:gpte4}
\t{\ka}(s,x,t,y)\leq \ka (s,x,t,y)e^{Q(s,t)}.
\end{equation*}
\end{theorem}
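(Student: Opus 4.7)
My plan is to follow the proof of Theorem~\ref{t1} pointwise, replacing each kernel composition by the explicit iterated integral that defines $\ka_n$ in the density framework, and exploiting super-additivity of $Q$ in the same way.

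First I would record the density analogue of Lemma~\ref{l1}: for $n\geq 1$ and $0\leq m\leq n-1$,
\begin{equation*}
\ka_n(s,x,t,y) \;=\; \intl_s^t\!\intl_X \ka_{n-1-m}(s,x,u,z)\!\intl_{(u,t)\times X}\! J(u,z,du_1dz_1)\,\ka_m(u_1,z_1,t,y)\,dz\,du.
\end{equation*}
This is just the associativity used in Lemma~\ref{l1} rewritten in density form; measurability of $\ka$ and $\si$-finiteness of $dt$ and $dz$ make the repeated use of Fubini/Tonelli routine. Then I prove $\ka_n \leq [\eta + Q(s,t)/n]\ka_{n-1}$ by induction on $n$; the base case $n=1$ is the standing assumption. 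For the inductive step I use
\begin{equation*}
(n+1)\ka_{n+1}(s,x,t,y) \;=\; n\,[\ka_n\!*\!J\!*\!\ka](s,x,t,y) \;+\; [\ka_{n-1}\!*\!J\!*\!\ka_1](s,x,t,y),
\end{equation*}
where both bracketed terms equal $\ka_{n+1}(s,x,t,y)$ by the displayed Lemma (with $m=0$ and $m=1$), and where $*J*$ denotes the iterated integral above. Into the first bracket I insert the inductive bound $\ka_n(s,x,u,z)\leq[\eta+Q(s,u)/n]\ka_{n-1}(s,x,u,z)$, and into the second the standing hypothesis $\ka_1(u_1,z_1,t,y)\leq[\eta+Q(u_1,t)]\ka(u_1,z_1,t,y)$.

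After collecting $\eta$-terms from both brackets (which account for $(n+1)\eta\,\ka_n$ via the $m=0$ Lemma), the remaining $Q$-contribution has the form
\begin{equation*}
\intl_s^t\!\intl_X \ka_{n-1}(s,x,u,z)\!\intl_{(u,t)\times X}\!\! J(u,z,du_1dz_1)\,[Q(s,u)+Q(u_1,t)]\,\ka(u_1,z_1,t,y)\,dz\,du.
\end{equation*}
On the domain of integration $s<u<u_1<t$ super-additivity gives $Q(s,u)\leq Q(s,u_1)$ and hence $Q(s,u)+Q(u_1,t)\leq Q(s,t)$, so this is bounded by $Q(s,t)\ka_n(s,x,t,y)$ after reassembling the double integral via the density Lemma one more time. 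Dividing by $n+1$ yields the claimed recursive inequality, and iterating it produces the product bound. The two final consequences follow word-for-word as in Theorem~\ref{t1}: the Taylor series $(1-\eta)^{-a}=\sum_n \eta^n(a)_n/n!$ with $a=1+Q(s,t)/\eta$ gives the $(1-\eta)^{-(1+Q(s,t)/\eta)}$ bound for $0<\eta<1$, and the exponential series gives the $e^{Q(s,t)}$ bound for $\eta=0$.

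The main obstacle is purely bookkeeping: every composition that was symbolic on kernels $K_n$ in Theorem~\ref{t1} must now be written out as an iterated integral over $(s,t)\times X$ and manipulated via Tonelli. The measurability and $\si$-finiteness assumptions make all such interchanges legal, so no new analytic ingredient beyond what already appears in the proof of Theorem~\ref{t1} is required; the work is in verifying that the $Q$-factors line up correctly against the $s<u<u_1<t$ ordering.
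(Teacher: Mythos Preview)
Your proposal is correct and is exactly what the paper intends: the paper skips the proof of Theorem~\ref{t3}, noting only that it is similar to that of Theorem~\ref{t1}, and your argument is precisely that adaptation, rewriting each kernel composition as the corresponding iterated density integral and using the super-additivity of $Q$ in the same way. Your handling of the $Q$-terms (bounding $Q(s,u)+Q(u_1,t)$ via $Q(s,u)\le Q(s,u_1)$ and then $Q(s,u_1)+Q(u_1,t)\le Q(s,t)$) is a minor cosmetic variant of the paper's route (which instead bounds $Q(u_1,t)\le Q(u,t)$ first), but both are immediate consequences of super-additivity and lead to the same conclusion.
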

We also skip this proof, because it is similar 
that of Theorem~\ref{t1}.
\section{Transition kernels}\label{sec:tk}
Let $k$ above (note the joint measurability) be a {\it transition kernel} i.e. additionally satisfy the Chapman-Kolmogorov conditions for $s<u<t$, $A\in \cM$, 
\begin{equation*}\label{eq:CK}
\int_X k(s,x,u,dz)k(u,z,t,A)=k(s,x,t,A).
\end{equation*}
We note that we do {\it not} assume $k(s,x,t,X)=1$.

Following \cite{MR2457489}, we shall show that $\t{k}$ is a transition kernel, too.
\begin{lemma}\label{le:prop:1}
For all $s<u<t$, $x,y\in X$, $A\in \cM$ and $n=0,1,\ldots$,
\begin{equation}\label{prop:1}
\sum_{m=0}^n\intl_X k_m(s,x,u,dz)k_{n-m}(u,z,t,A)=k_n(s,x,t,A)
\end{equation}
\end{lemma}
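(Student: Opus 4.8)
The plan is to prove \eqref{prop:1} by induction on $n$, exploiting the recursive definition of the $k_m$'s together with the Chapman--Kolmogorov identity for $k=k_0$. For $n=0$ the claim is exactly the Chapman--Kolmogorov condition $\int_X k(s,x,u,dz)k(u,z,t,A)=k(s,x,t,A)$, which we are assuming. So fix $n\ge 1$ and assume \eqref{prop:1} holds with $n$ replaced by any smaller nonnegative integer.

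The key computational step is to split the left-hand side of \eqref{prop:1} into the terms $m=0$ and $m\ge 1$, and treat them differently. For the $m\ge 1$ terms I would use the definition
$$
k_m(s,x,u,dz)=\intl_s^u\intl_X k_{m-1}(s,x,v,dw)\intl_{(v,u)\times X}J(v,w,dv_1dw_1)k(v_1,w_1,u,dz)\,dv
$$
to peel off the innermost factor $k(v_1,w_1,u,dz)$; then composing with $k_{n-m}(u,z,t,A)$ and applying Chapman--Kolmogorov to $\int_X k(v_1,w_1,u,dz)k_{n-m}(u,z,t,A)$ is not directly what is needed — rather, I would instead peel off the \emph{outermost} structure. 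Concretely, rewrite $k_m$ for $m\ge 1$ by using Lemma~\ref{l1}-type associativity at the level of the $k$'s: since the composition defining $k_m$ is associative, one has
$$
k_m(s,x,u,dz)=\intl_s^u\intl_X\Big(\sum\nolimits\Big)\ldots
$$
More cleanly: for $m\ge 1$, $k_m = k\,(J k)^m$ read as iterated integrals over the time variable, and one can split off the \emph{last} block, writing $k_m(s,x,u,dz)$ as an integral over an intermediate time $v<u$ of $k_{m-1}(s,x,v,dw)$ against $\int_{(v,u)\times X}J(v,w,dv_1dw_1)k(v_1,w_1,u,dz)$. Substituting this into the sum, interchanging the order of integration (Tonelli, everything nonnegative), and using Chapman--Kolmogorov for the innermost $k(v_1,w_1,u,\cdot)$ composed with $k_{n-m}(u,\cdot,t,A)$ gives $k(v_1,w_1,t,A)$, so that the $m$-th summand collapses to
$$
\intl_s^u\intl_X k_{m-1}(s,x,v,dw)\intl_{(v,t)\times X}J(v,w,dv_1dw_1)k(v_1,w_1,t,A)\,dv,
$$
valid because $v_1<u<t$ and the forward property restricts the $J$-integration to $(v,u)\times X\subseteq(v,t)\times X$. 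Summing over $m$ from $1$ to $n$ and relabeling $m-1$, the total of the $m\ge1$ terms becomes $\sum_{j=0}^{n-1}(\text{this expression with } k_j)$; combining with the $m=0$ term $\int_X k(s,x,u,dz)k_n(u,z,t,A)$ and carefully accounting for which time intervals appear, this should reassemble into $\sum_{j=0}^{n}k_{n-j}$-type structure producing exactly $k_n(s,x,t,A)$ via the induction hypothesis applied on the strip $(s,u]$ versus $(u,t]$.

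I would actually organize the induction more symmetrically: replace $n-m$ by the recursion on $k_{n-m}$ instead, splitting off the \emph{first} block of $k_{n-m}(u,z,t,A)$ when $n-m\ge1$, so that the intermediate time produced lies in $(u,t)$, and then glue it to $k_m(s,x,u,dz)$ using Chapman--Kolmogorov at time $u$; this is the approach of \cite{MR2457489} and keeps the bookkeeping of the inner $J$-integration domain $(u_1,t)\times X$ consistent. Either way, the mechanism is: one recursion step on one factor, one interchange of integrals, one application of Chapman--Kolmogorov to merge a $k$ with a $k_{\bullet}$ at an intermediate time, then the induction hypothesis, then re-index.

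The main obstacle I anticipate is purely bookkeeping: keeping track of the domains of integration for the $J$-variables (the forward property means $J(v,w,dv_1dw_1)$ lives on $(v,\infty)\times X$, but the definitions of $k_n$ further restrict it to $(v,t)\times X$ or $(v,u)\times X$), and making sure that after applying Chapman--Kolmogorov the resulting expression's restricted domain matches the one demanded by the definition of $k_n(s,x,t,A)$. A secondary point is to justify all the interchanges of integration, which is immediate by Tonelli's theorem since every kernel and every $k_n$ is nonnegative and built from measurable ingredients. Once the domains are seen to line up, the algebra of re-indexing the double sum $\sum_{m}\sum_{j}$ into the single sum defining $k_n$ is routine.
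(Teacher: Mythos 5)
Your skeleton (induction on $n$, base case $=$ Chapman--Kolmogorov, one recursion step on a factor, Tonelli, CK, induction hypothesis) is the right one, but the crucial computational step as you write it is wrong. You claim that Chapman--Kolmogorov applied to ``$k(v_1,w_1,u,\cdot)$ composed with $k_{n-m}(u,\cdot,t,A)$'' yields $k(v_1,w_1,t,A)$. Chapman--Kolmogorov merges two copies of $k=k_0$ only; the identity $\int_X k(v_1,w_1,u,dz)\,k_{n-m}(u,z,t,A)=k_{n-m}(v_1,w_1,t,A)$ is not CK and is false in general (it is a single summand of the very identity \eqref{prop:1} at order $n-m$, whereas \eqref{prop:1} equates the \emph{sum} over all splittings with $k_{n-m}$), and the value you assert, $k(v_1,w_1,t,A)$, is wrong even in order: the left-hand side carries $n-m$ factors of $J$, your right-hand side none. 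Hence the ``collapsed'' form of the $m$-th summand and the subsequent reassembly ``into a $\sum_j k_{n-j}$-type structure'' is not a derivation. The same defect reappears in your ``more symmetric'' variant: you propose to glue the first block $k(u,z,r,dw)$ of $k_{n-m}$ to $k_m(s,x,u,dz)$ ``using Chapman--Kolmogorov at time $u$'', but CK can merge $k$ with $k_m$ only when $m=0$; for $m\ge 1$ such a merge is again an instance of the lemma, not of CK. (Also, replacing the $J$-domain $(v,u)\times X$ by $(v,t)\times X$ on the grounds of inclusion changes the value of a nonnegative integral; this is precisely the bookkeeping point where an argument, not an inclusion, is required.)

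The paper's proof is organized so that these issues do not arise. Split the sum into the terms $m=0,\dots,n-1$ and the single term $m=n$. For $m\le n-1$, expand the \emph{second} factor $k_{n-m}(u,z,t,A)$ by its defining recursion (peel off its last block $Jk$, producing an intermediate time $r\in(u,t)$), interchange integrals, and apply the induction hypothesis to the full convolution sum $\sum_{m=0}^{n-1}\int_X k_m(s,x,u,dz)\,k_{(n-1)-m}(u,z,r,dw)=k_{n-1}(s,x,r,dw)$; this gives $\int_u^t dr\int_X k_{n-1}(s,x,r,dw)\int J(r,w,dr_1dw_1)\,k(r_1,w_1,t,A)$. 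For $m=n$, expand the \emph{first} factor $k_n(s,x,u,dz)$ by the same recursion and apply CK only to $\int_X k(r_1,w_1,u,dz)\,k(u,z,t,A)=k(r_1,w_1,t,A)$, where both factors are of order zero; this gives the same integrand integrated in $r$ over $(s,u)$. Adding the two pieces yields the $r$-integral over $(s,t)$, i.e. $k_n(s,x,t,A)$. So the induction hypothesis is used on the whole sum $m\le n-1$ at the new intermediate time, CK is used exactly once and only between two copies of $k$, and no associativity identity for the $k_n$'s beyond the defining recursion is needed; if you want to run your ``first block'' version instead, you must first prove that alternative representation of $k_n$ and still reserve CK for the $m=0$ term alone, handling $m\ge1$ by the induction hypothesis as in \cite{MR2457489}.
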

\begin{proof}
We note that (\ref{prop:1}) is true for $n=0$ by fact that $k$ is a transition kernel and satisfies the Chapman-Kolmogorov equation. Assume that $n\geq 1$ and (\ref{prop:1}) holds for $n-1$. The sum of the first $n$ terms on the left of (\ref{prop:1}) can be dealt with by induction:
\begin{eqnarray}\label{prop:11}
&&\sum_{m=0}^{n-1}\intl_X k_m(s,x,u,dz)k_{n-m}(u,z,t,A)\\
&=&\sum_{m=0}^{n-1}\intl_X k_m(s,x,u,dz)\intl_u^t \intl_X k_{n-m-1}(u,z,r,dw) \nonumber\\
&&\times\intl_{(r,\infty)\times X}J(r,w,dr_1dw_1)k(r_1,w_1,t,A)dr\nonumber\\
&=&\intl_u^t \intl_X \intl_{(r,\infty)\times X} J(r,w,dr_1dw_1)k(r_1,w_1,t,A)\nonumber\\
&&\times \sum_{m=0}^{n-1} \intl_X k_m(s,x,u,dz)k_{(n-1)-m}(u,z,r,dw)dr\nonumber\\
&=&\intl_u^t \intl_X k_{n-1}(s,x,r,dw)\intl_{(r,\infty)\times X} J(r,w,dr_1dw_1)k(r_1,w_1,t,A)dr.\nonumber
\end{eqnarray} 
The $(n+1)$-st term on the left of \eqref{prop:1} is
\begin{eqnarray}\label{prop:12}
&&\intl_X k_n(s,x,u,dz)k(u,z,t,A)\\
&=&\intl_X \intl_s^u\intl_X k_{n-1}(s,x,r,dw)\intl_ {(r,\infty)\times X} J(r,w,dr_1dw_1) k(r_1,w_1,u,dz)k(u,z,t,A)dr\nonumber\\
&=&\intl_s^u\intl_X k_{n-1}(s,x,r,dw)\intl_{(r,\infty)\times X} J(r,w,dr_1dw_1)k(r_1,w_1,t,A)dr,\nonumber
\end{eqnarray}
and (\ref{prop:1}) follows on adding (\ref{prop:11}) and (\ref{prop:12}).
\end{proof}
\begin{lemma}
For all $s<u<t$, $x,y\in \bbR^d$ and $A\in \cM$,
\begin{equation*}\label{chapman}
\intl_X\t{k}(s,x,u,dz)\t{k}(u,z,t,A)=\t{k}(s,x,t,A).
\end{equation*}
\end{lemma}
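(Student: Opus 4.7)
The plan is to expand both factors on the left as perturbation series and then reindex the resulting double sum so that Lemma~\ref{le:prop:1} can be applied directly. Concretely, I would write
\[
\intl_X\t k(s,x,u,dz)\t k(u,z,t,A)=\intl_X\Bigl(\sum_{m=0}^\infty k_m(s,x,u,dz)\Bigr)\Bigl(\sum_{l=0}^\infty k_l(u,z,t,A)\Bigr),
\]
and then push both sums outside the integral. Since every $k_n$ is nonnegative, Tonelli's theorem legitimately swaps the summations and integration, giving the double series
\[
\sum_{m=0}^\infty\sum_{l=0}^\infty\intl_X k_m(s,x,u,dz)k_l(u,z,t,A).
\]

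Next I would collect terms diagonally by setting $n=m+l$, turning the double sum into $\sum_{n=0}^\infty\sum_{m=0}^n\int_X k_m(s,x,u,dz)k_{n-m}(u,z,t,A)$; again this is just a reordering of a series of nonnegatives, so it is justified by Tonelli. At this point Lemma~\ref{le:prop:1} identifies the inner sum as $k_n(s,x,t,A)$, and I obtain $\sum_{n=0}^\infty k_n(s,x,t,A)=\t k(s,x,t,A)$, which is the desired identity.

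The only potential obstacle is verifying that everything in sight is measurable so that Tonelli applies, but this is already built into the standing assumptions: $k$ is jointly measurable and $J$ is a kernel, so the iterated integrals defining $k_n(s,x,u,dz)$ and $k_{n-m}(u,z,t,A)$ are measurable in $z$ for fixed $s,x,u,t,A$. Thus the proof is essentially a bookkeeping argument: the Chapman--Kolmogorov equation for $\t k$ is a direct algebraic consequence of the Chapman--Kolmogorov-type identity for the components $k_n$ established in Lemma~\ref{le:prop:1}.
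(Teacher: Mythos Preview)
Your argument is correct and is precisely the approach the paper has in mind: it cites \cite[Lemma~2]{MR2457489} and notes that the proof is based on \eqref{prop:1}, which is exactly the diagonal summation argument you carry out. Your justification via Tonelli for interchanging the series and the integral, relying on the nonnegativity of each $k_n$, fills in the details the paper leaves to the reference.
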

We refer to \cite[Lemma~2]{MR2457489} for the proof, based on \eqref{prop:1}.
Thus, $\t{k}$ is a transition kernel.

Similarly, the function  $\ka$ considered above  (note the joint measurability) is called transition density if it satisfies Chapman-Kolmogorov equations pointwise.
In an analogous way we then prove that $\t{\ka}$ defined above is a transition density, provided so is $\ka$. 

\section{Signed perturbation}\label{sec:sp}
The following discussion is modeled after \cite{MR2457489}.
We  consider perturbation of $K$ by  $m(s,x,t,y)J(s,x,dtdy)$, where $m: \bbR\times X\times \bbR\times X\to [-1,1]$ is jointly measurable. If $\t{K}$, our perturbation of $K$ by $J$, is finite, then the 
perturbation series resulting from $mJ$ is absolutely convergent, and the perturbation formula extends to this case.
For instance, 
the perturbation of $K$ by $-J$ is 
\begin{equation*}
\t{K}^-=\sum_{n=0}^{\infty}(-1)^n (KJ)^nK,
\end{equation*}
and
\begin{equation*}\label{eq:pfdeftk-}
\t{K}^-=K-\t{K}^-JK.
\end{equation*}
Clearly, if $\t{K}^-\ge 0$, then $\t{K}^-\le K$, but the former property is delicate cf. \cite[Section~4]{MR2457489}.
In this connection we note that if  $K$ is restricted to $S=(s,t]\times X$, then
under the assumptions of Theorem \ref{t1} by \eqref{estJ_n} we have (on $S$)
\begin{eqnarray*}
\t{K}^-&=&[K-KJK]+[(KJ)^2K-(KJ)^3K]-\ldots\\
&\ge& \sum_{n=0,2,\ldots}\left(1-\eta-\frac{Q(s,t)}{n+1}\right)(KJ)^{n}K
\geq \frac{1-\eta}{2}K,
\end{eqnarray*}
provided $Q(s,t)\le (1-\eta)/2$ 
and we also have (on $S$)
\begin{eqnarray}
\t{K}^-&=&K-[KJK-(KJ)^2K]-[(KJ)^3K-(KJ)^4K]-\ldots\nonumber\\
&\leq& K-\sum_{n=1,3,\ldots}\left(1-\eta-\frac{Q(s,t)}{n+1}\right)(KJ)^{n}K
\leq K,\label{eq:pfdeftk-ub}
\end{eqnarray}
provided $Q(s,t)\leq 2(1-\eta)$.
Chapman-Kolmogorov equations allow to propagate this for {\it transition} kernels $k$ as follows.
If 
$s=u_0<u_1<\ldots<u_{n-1}<u_n=t$ and 
$Q(u_{l-1},u_l)\le (1-\eta)/2$ for $l=1,2,\ldots,n$, then
\begin{eqnarray}
\nonumber
&&\t{\kk}(s,x,t,A)=\intl_X\ldots \intl_X \t{\kk}(s,x,u_1,dz_1)\t{\kk}(u_1,z_1,u_2,dz_2)\ldots \t{\kk}(u_{n-1},z_{n-1},t,A)\\\nonumber
&\geq & \left(\frac{1-\eta}{2}\right)^{n}\intl_X\ldots \intl_X \kk(s,x,u_1,dz_1)\kk(u_1,z_1,u_2,dz_2)\ldots \kk(u_{n-1},z_{n-1},t,A)\\
&=&\left(\frac{1-\eta}{2}\right)^{n} \kk(s,x,t,A).\label{eq:lb2n}
\end{eqnarray}
If $Q(s,t)\leq h(t-s)$ for a function $h$, and $h(0^+)=0$, then global nonnegativity and lower bounds for $\t{\kk}^-$ easily follow, and so 
\begin{equation*}
0\le \t{k}^-\le k.
\end{equation*}
Analogous results hold pointwise for {\it transition} densities $\kappa$ (we skip details).

We remark that estimates of transition kernels give bounds for the corresponding resolvent and potential operators provided we also have bounds for large times (see \cite[Lemma~7]{KBTJ2012} and \eqref{eq:bsg} in this connection).

\section{Applications}\label{sec:a}
Verification of \eqref{estJ_1} usually requires some work. Here is a case study.
Let $\al\in (0,2)$. Consider the convolution semigroup of functions defined as
\begin{equation}\label{eq:tFpt}
p_t(x)=(2\pi)^{-d}\int_{\bbR^d}e^{ixu}e^{-t|u|^{\al}}du\quad {\rm for }\quad t>0\; x\in \bbR^d.
\end{equation}
The semigroup is generated by the fractional Laplacian $\Delta^{\alpha/2}$ (\cite{MR2569321}).
By \eqref{eq:tFpt},
\begin{equation*}
p_t(x)= t^{-\frac{d}{\al}}p_1(t^{-\frac{1}{\al}}x).\label{scaling}
\end{equation*}
By subordination (\cite{MR2569321}) we see that $p_t(x)$ is decreasing in $|x|$:
\begin{equation}
p_t(x)\geq p_t(y)\quad {\rm if } \quad |x|\leq |y|.\label{decreasing}
\end{equation}
We write $f(a,\ldots,z)\approx g(a,\ldots,z)$ if there is a number $0<C<\infty$ independent of $a,\ldots,z$, i.e. a {\it constant}, such that $C^{-1}f(a,\ldots,z)\leq g(a,\ldots,z) \leq Cf(a,\ldots,z)$ for all $a,\ldots,z$.
We have (see, e.g., \cite{KBTJ2012}),
\begin{equation}\label{property}
p_t(x) \approx t^{-\frac{d}{\al}}\wedge\frac{t}{|x|^{d+\al}}.
\end{equation}
\noindent Noteworthy, 
$t^{-\frac{d}{\al}}\leq{t}/|x|^{d+\al}$ iff $t\leq |x|^{\al}$.
We observe the following property:
\begin{equation*}
{\rm If} \quad |x|\approx|y|, \quad {\rm then}\quad p_t(x)\approx p_t(y).
\end{equation*}
We denote
\begin{equation*}
p(s, x, t, y) = p_{t-s}(y - x),\quad  x, y \in \bbR^d,  s < t\label{notation}.
\end{equation*}
This $p$ is the transition density of the standard isotropic $\al$-stable L\'evy process $(Y_t, P^x)$ in $\bbR^d$ with the L\'evy measure $\nu(dz)=c|z|^{-d-\alpha}dz$, and generator $\Delta^{\alpha/2}$.

To study \eqref{estJ_1}, we consider nonnegative jointly Borelian $j(x,y)$ on $\bbR^d\times \bbR^d$, and we define the norm 
\begin{equation*}\label{norm}
\|j\| :=\left( \sup_{z\in \bbR^d}\intl_{\bbR^d}|j(z,w)|dw\right) \vee \left(\sup_{w\in \bbR^d}\intl_{\bbR^d}|j(z,w)|dz \right).
\end{equation*} 
\begin{lemma}\label{l:con_j}
There are $\eta \in [0,1)$ and $c<\infty$ such that
\begin{equation}\label{condition}
\!\intl_s^t\! du\!\intl_{\bbR^d}\! dz\!\intl_{\bbR^d}\! dw\ p(s,x,u,z)j(z,w)p(u,w,t,y)\leq[\eta+c(t-s)]p(s,x,t,y),
\end{equation}
if $\|j\|<\infty$, $|j(z,w)|\le \varepsilon|w-z|^{-d-\al}$ and  $\varepsilon>0$ is sufficiently small.
\end{lemma}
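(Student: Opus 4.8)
By time translation we may take $s=0$ and write $T=t-s>0$; and since the left side of \eqref{condition} only increases if $j$ is replaced by $|j|$, we may assume $j\ge0$. Thus we must bound $I:=\int_0^T\int_{\bbR^d}\int_{\bbR^d}p_r(z-x)\,j(z,w)\,p_{T-r}(y-w)\,dw\,dz\,dr$ by $[\eta+cT]\,p_T(y-x)$. My plan is to cut $j$ at the natural length scale $T^{1/\al}$: set $j'(z,w)=j(z,w)$ when $|z-w|>T^{1/\al}$ and $j'(z,w)=0$ otherwise, and $j''=j-j'$. The two hypotheses are then used one at a time --- $j'$ through the pointwise bound $|j(z,w)|\le\varepsilon|z-w|^{-d-\al}$, giving the time-independent term $\eta\,p_T(y-x)$, and $j''$ through $\|j\|<\infty$, giving $cT\,p_T(y-x)$.

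For the far part, \eqref{property} gives $p_T(z-w)\approx T|z-w|^{-d-\al}$ on $\{|z-w|>T^{1/\al}\}$, so $|z-w|^{-d-\al}\le C_0 T^{-1}p_T(z-w)$ there, and the $j'$-contribution to $I$ is at most $C_0\varepsilon T^{-1}\int_0^T\int\int p_r(z-x)\,p_T(z-w)\,p_{T-r}(y-w)\,dw\,dz\,dr$. Since $p$ satisfies the Chapman--Kolmogorov (semigroup) identity, the inner double integral collapses: $\int p_T(z-w)p_{T-r}(y-w)\,dw=p_{2T-r}(y-z)$ and then $\int p_r(z-x)p_{2T-r}(y-z)\,dz=p_{2T}(y-x)$. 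Integrating in $r\in(0,T)$ we get $C_0\varepsilon\,p_{2T}(y-x)$, which by \eqref{property} is $\le C_1\varepsilon\,p_T(y-x)$. We set $\eta=C_1\varepsilon$, which belongs to $[0,1)$ once $\varepsilon$ is small.

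For the near part I split $\int_0^T dr=\int_0^{T/2}+\int_{T/2}^T$. On the support of $j''$ we have $|z-w|\le T^{1/\al}$, hence $|z-w|\le 2^{1/\al}(T-r)^{1/\al}$ when $r\le T/2$; then \eqref{property} and \eqref{decreasing} give $p_{T-r}(y-w)\le C_2\,p_{T-r}(y-z)$ --- an elementary comparison obtained by distinguishing whether $|y-z|$ and $|y-w|$ are smaller than, comparable to, or larger than $(T-r)^{1/\al}$. Integrating out $w$ by $\int j''(z,w)\,dw\le\|j\|$ and applying Chapman--Kolmogorov, the $\int_0^{T/2}$ part is $\le C_2\|j\|\int_0^{T/2}\int p_r(z-x)p_{T-r}(y-z)\,dz\,dr=C_2\|j\|\,\tfrac{T}{2}\,p_T(y-x)$. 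The $\int_{T/2}^T$ part is symmetric: there $|z-w|\le 2^{1/\al}r^{1/\al}$, so $p_r(z-x)\le C_2\,p_r(w-x)$, one integrates out $\int j''(z,w)\,dz\le\|j\|$, and Chapman--Kolmogorov again gives $\le C_2\|j\|\,\tfrac{T}{2}\,p_T(y-x)$. Adding the three contributions yields \eqref{condition} with $\eta=C_1\varepsilon$ and $c=C_2\|j\|$, the constants $C_0,C_1,C_2$ depending only on $d$ and $\al$.

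The one genuinely delicate point --- and the reason the cut is placed exactly at $T^{1/\al}$ --- is the behaviour of $j$ near the diagonal $z=w$: there the pointwise bound $\varepsilon|z-w|^{-d-\al}$ fails to be even locally integrable and is useless, so one must rely on $\|j\|<\infty$; but before integrating $j''$ out one has to trade $p_{T-r}(y-w)$ for $p_{T-r}(y-z)$ (respectively $p_r(z-x)$ for $p_r(w-x)$), which is legitimate only because, after the time split, the cut-off confines $|z-w|$ to order at most $(T-r)^{1/\al}$ (respectively $r^{1/\al}$). Verifying that comparison uniformly, and checking that the truncation of $|z-w|^{-d-\al}$ to $\{|z-w|>T^{1/\al}\}$ is dominated by a constant multiple of $T^{-1}p_T(z-w)$ so that Chapman--Kolmogorov applies, are the substantive steps; the remainder is bookkeeping with \eqref{property}.
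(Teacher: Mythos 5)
Your argument is correct, but it is organized differently from the paper's. You split the double space integral according to the size of the jump $|z-w|$ relative to the time scale $(t-s)^{1/\al}$: on the far part you convert the pointwise hypothesis into a third heat kernel via $|z-w|^{-d-\al}\le C T^{-1}p_T(z-w)$ for $|z-w|>T^{1/\al}$ (a consequence of \eqref{property}), collapse the three kernels by the semigroup property to $p_{2T}(y-x)\le C p_T(y-x)$, and obtain the time-independent term $\eta=C_1\varepsilon$; on the near part you split time at the midpoint, trade $p_{T-r}(y-w)$ for $p_{T-r}(y-z)$ (resp.\ $p_r(z-x)$ for $p_r(w-x)$) because the jump is below the surviving time scale, integrate $j$ out with $\|j\|$, and get $c(t-s)$ with $c=C_2\|j\|$. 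The paper instead uses a spatial trichotomy in $(z,w)$: the regions where $|z-y|\lesssim|w-y|$ or $|w-x|\lesssim|z-x|$ (handled with $\|j\|$, a kernel comparison like yours, and Chapman--Kolmogorov, giving $c(t-s)$), and the remaining region where $z$ is near $x$ and $w$ near $y$, forcing $|z-w|\gtrsim|y-x|$, where the $\varepsilon$-bound applies; there it further distinguishes $t-s\le 2|y-x|^{\al}$ (producing $\eta\sim\varepsilon$) from $t-s>2|y-x|^{\al}$ (midpoint time split, producing another $c(t-s)$ term). Both routes rest on \eqref{property}, the monotonicity/comparison of $p_t$, and the semigroup property, and both yield $\eta\sim\varepsilon$, $c\sim\|j\|$; your version separates the roles of the two hypotheses more transparently and avoids the spatial case analysis, but it does use the symmetry $p_t(v)=p_t(-v)$ to perform the convolution collapse with a ``time-reversed'' kernel, whereas the paper only composes kernels forward in time --- a point worth keeping in mind if one wants to relax the isotropy of $p$. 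Your sketched comparison $p_\tau(y-w)\le C p_\tau(y-z)$ for $|z-w|\lesssim\tau^{1/\al}$ is indeed elementary from \eqref{property} and \eqref{decreasing}, so there is no gap.
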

\begin{proof}   
Denote $I=p(s,x,u,z)j(z,w)p(u,w,t,y)$. Consider three sets $A_1=\{(z,w)\in \bbR^d \times \bbR^d:|z-y|\leq 4\}$, $A_2=\{(z,w)\in \bbR^d \times \bbR^d:|w-x|\leq 4|z-x|\}$ and $B=\{(z,w)\in \bbR^d \times \bbR^d:|z-x|\leq \frac{1}{3}|y-x|, \; |w-y|\leq \frac{1}{3}|y-x|\}$. The union of $A_1, A_2$ and $B$ gives the whole of $\bbR^d$.

If $|z-y|\leq 4|w-y|$, then 
$p(u,w,t,y)\leq c_1 p(u,z,t,y)$, and by (\ref{decreasing}),
\begin{eqnarray*}
\intl_s^t du\iintl_{A_1}dzdw\; I &{\leq}& c_1 \intl_s^t du\iintl_{A_1}dzdw \;p(s,x,u,z)j(z,w)p(u,z,t,y) \\
&{\leq}& c_1 \|j\|\intl_s^t du\intl_{\bbR^d}dz \;p(s,x,u,z)p(u,z,t,y)\\
&=& c_1 \|j\|(t-s)p(s,x,t,y),
\end{eqnarray*}
which is satisfactory, see \eqref{l:con_j}.
The case of $A_2$ is similar.
For $B$ we first consider the case $t-s \leq 2|y-x|^{\al}$, and we obtain
\begin{eqnarray*}
\intl_s^t du\iintl_{B}dzdw\; I &{\leq}& \intl_s^t du\iintl_{B}dzdw\; p(s,x,u,z)\varepsilon |w-z|^{-d-\al}p(u,w,t,y),
\end{eqnarray*}
\begin{eqnarray*}
&{\leq}& 3^{d+\al}\varepsilon \intl_s^t du\iintl_{B}dzdw\; p(s,x,u,z)|y-x|^{-d-\al} p(u,w,t,y)\\
&\leq& 3^{d+\al}\varepsilon \intl_s^t du\intl_{\bbR^d}\intl_{\bbR^d}dzdw\; p(s,x,u,z)p(u,w,t,y)\\
&=&3^{d+\al}\varepsilon|y-x|^{-d-\al}(t-s) \approx 3^{d+\al}\varepsilon p(s,x,t,y).
\end{eqnarray*}
In the case $t-s > 2|y-x|^{\al}$ we obtain
\begin{eqnarray*}
\intl_s^t du\iintl_{B}dzdw\; I &=&\intl_s^{\frac{s+t}{2}} du\iintl_{B}dzdw\; p(s,x,u,z)j(z,w)p(u,w,t,y)\\
&&+\intl_{\frac{s+t}{2}}^t du\iintl_{B}dzdw\; p(s,x,u,z)j(z,w)p(u,w,t,y)\\
&\leq&\intl_s^{\frac{s+t}{2}} du\iintl_{B}dzdw\; p(s,x,u,z)j(z,w) (t-u)^{-\frac{d}{\al}}\\
&&+\intl_{\frac{s+t}{2}}^t du\iintl_{B}dzdw\; (u-s)^{-\frac{d}{\al}}j(z,w)p(u,w,t,y)\\
&\leq&\intl_s^{\frac{s+t}{2}} du\iintl_{B}dzdw\; p(s,x,u,z)j(z,w) \left(\frac{t-s}{2}\right)^{-\frac{d}{\al}}\\
&&+\intl_{\frac{s+t}{2}}^t du\iintl_{B}dzdw\; \left(\frac{t-s}{2}\right)^{-\frac{d}{\al}}j(z,w)p(u,w,t,y)\\
&\leq& 2^{\frac{d}{\al}}\|j\|(t-s)^{-\frac{d}{\al}}(t-s)\approx 2^{\frac{d}{\al}}\|j\|(t-s)p(s,x,t,y).
\end{eqnarray*}
We can take $\eta=3^{d+\al}\varepsilon$ and $c=c_1\|j\|+2^{d/\al}\|j\|$ in \eqref{condition}.
\end{proof}
In what follows, $\t{p}$  denotes the perturbation of $p$ by $J(s,x,dtdy)=j(x,y)\delta_s(dt)dy$, and $\t{p}^{\,-}$ is the perturbation of $p$ by $-J$.
In view of Theorem~\ref{t3} and \eqref{eq:lb2n} we obtain the following result.
\begin{corollary}\label{cor:lupfL}
If \eqref{condition} holds with $0\le\eta <1$, then for $s,t\in \bbR$, $x,y\in \bbR^d$,
\begin{equation}\label{eq:ueb}
\t{p} (s,x,t,y)\leq p (s,x,t,y){\left(\frac{1}{1-\eta}\right)}^{1+c(t-s)/\eta},
\end{equation}
and 
\begin{equation*}
p (s,x,t,y)\left(\frac{1-\eta}{2}\right)^{1+2c(t-s)/(1-\eta)}\le \t{p}^{\,-} (s,x,t,y)\le p (s,x,t,y).
\end{equation*}
\end{corollary}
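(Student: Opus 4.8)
The statement has two parts: the upper bound \eqref{eq:ueb} for $\t{p}$, and the two-sided bound for $\t{p}^{\,-}$. For the upper bound I would simply invoke Theorem~\ref{t3} with $Q(s,t):=c(t-s)$. This $Q$ is additive, hence super-additive, and for $J(s,x,dtdy)=j(x,y)\delta_s(dt)dy$ and $\ka=p$ the inner integral over $(u,t)\times\bbR^d$ against $J(u,z,\cdot)$ reduces to $\int_{\bbR^d}j(z,w)\,p(u,w,t,y)\,dw$, so the integral hypothesis of Theorem~\ref{t3} is precisely \eqref{condition}. With $0<\eta<1$ the first conclusion of Theorem~\ref{t3} is \eqref{eq:ueb} verbatim; for $\eta=0$ its $\eta=0$ case gives $\t{p}\le p\,e^{c(t-s)}$, the $\eta\downarrow0$ value of the displayed right-hand side. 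In particular $\t{p}<\infty$ everywhere, so, as recorded in Section~\ref{sec:sp}, the perturbation series for $\t{p}^{\,-}$ converges absolutely and the perturbation formula $\t{K}^-=K-\t{K}^-JK$ holds with $K=p$.

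\textbf{A local lower bound for $\t{p}^{\,-}$.} Fix $s<t$ with $c(t-s)\le(1-\eta)/2$ and restrict $p$ to $(s,t]\times\bbR^d$, which is legitimate since $p_n(s,x,t,y)$ only involves intermediate times in $(s,t)$. Writing $p_n$ for the $n$-th term of the perturbation series, the pointwise analogue of \eqref{estJ_n} provided by Theorem~\ref{t3} gives $p_n(s,x,t,y)\le[\eta+c(t-s)/n]\,p_{n-1}(s,x,t,y)$. Grouping $\t{p}^{\,-}=\sum_n(-1)^np_n$ into consecutive pairs $p_{2m}-p_{2m+1}$, each pair is $\ge(1-\eta-c(t-s)/(2m+1))\,p_{2m}\ge0$, so keeping only $m=0$ yields
\[
\t{p}^{\,-}(s,x,t,y)\ \ge\ \bigl(1-\eta-c(t-s)\bigr)\,p(s,x,t,y)\ \ge\ \tfrac{1-\eta}{2}\,p(s,x,t,y),
\]
which is the pointwise counterpart of the display preceding \eqref{eq:pfdeftk-ub}.

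\textbf{Globalisation via Chapman--Kolmogorov.} The pointwise analogue of Lemma~\ref{le:prop:1}, i.e. identity \eqref{prop:1} for the terms $p_n$ of the perturbation of $p$ by $J$, is purely algebraic, so summing it against $(-1)^N$ and using absolute convergence shows $\t{p}^{\,-}$ is again a transition density. Hence, for arbitrary $s<t$, set $n:=\lceil 2c(t-s)/(1-\eta)\rceil$ (and $n:=1$ if this is $0$) and $u_l:=s+l(t-s)/n$, so that $c(u_l-u_{l-1})\le(1-\eta)/2$; iterating Chapman--Kolmogorov and applying the local bound to each factor, exactly as in \eqref{eq:lb2n},
\[
\t{p}^{\,-}(s,x,t,y)=\intl_{(\bbR^d)^{n-1}}\prod_{l=1}^{n}\t{p}^{\,-}(u_{l-1},z_{l-1},u_l,z_l)\,dz_1\cdots dz_{n-1}\ \ge\ \Bigl(\tfrac{1-\eta}{2}\Bigr)^{n}p(s,x,t,y),
\]
with $z_0=x$, $z_n=y$. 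Since $0<\tfrac{1-\eta}{2}\le\tfrac12<1$ and $n\le 1+2c(t-s)/(1-\eta)$, this gives the asserted lower bound $\t{p}^{\,-}(s,x,t,y)\ge p(s,x,t,y)\bigl(\tfrac{1-\eta}{2}\bigr)^{1+2c(t-s)/(1-\eta)}$; in particular $\t{p}^{\,-}\ge0$, and then the kernel-density form of the signed perturbation formula, $\t{p}^{\,-}=p-\t{p}^{\,-}Jp$, together with $J,p\ge0$, forces $\t{p}^{\,-}\le p$, completing the two-sided estimate.

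\textbf{Main obstacle.} The computational heart --- the alternating-series estimate --- is immediate once Theorem~\ref{t3} is available; the delicate step is the globalisation, namely checking that the \emph{signed} perturbation $\t{p}^{\,-}$ still satisfies Chapman--Kolmogorov pointwise so that it may be composed, and bookkeeping the partition so that the product of the local constants is exactly $\bigl(\tfrac{1-\eta}{2}\bigr)^{n}$ with $n\le 1+2c(t-s)/(1-\eta)$, which is what produces the exponent in the statement. These are the signed counterparts of the facts the paper establishes, or asserts as ``analogous,'' in Sections~\ref{sec:tk} and~\ref{sec:sp}.
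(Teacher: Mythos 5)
Your proposal is correct and follows exactly the route the paper intends: the upper bound is Theorem~\ref{t3} applied with $Q(s,t)=c(t-s)$ (hypothesis being precisely \eqref{condition}), and the two-sided bound for $\t{p}^{\,-}$ comes from the alternating-series argument of Section~\ref{sec:sp} propagated by Chapman--Kolmogorov as in \eqref{eq:lb2n}. You merely spell out details the paper leaves implicit (the $\eta=0$ reading of \eqref{eq:ueb}, the pointwise Chapman--Kolmogorov property of the signed perturbation, and the partition bookkeeping giving the exponent $1+2c(t-s)/(1-\eta)$), all of which check out.
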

\noindent
If $j(z,w)=j(w,z)$, then the estimates agree with those obtained in \cite{MR2008600}.

We shall verify that $\t{p}$ is the fundamental solution of $\Delta^{\alpha/2}+J$, i.e.
\begin{equation}\label{fundt:1}
\intl_{\bbR}\intl_{\bbR^d}\t{p}(s,x,t,y)[\partial_t+\Delta_y^{\al/2}+j(x,y)]\phi(t,y)dydt=-\phi(s,x),
\end{equation}
provided \eqref{condition} holds with $0\le\eta <1$. Here and below $s\in\bbR $, $x\in\bbR^d$, and $\phi$ is a smooth compactly supported function on $\bbR\times\bbR^d$.
By \eqref{eq:tFpt} (see also \cite{KBTJ2012}),
\begin{equation}\label{fund:1}
\intl_{\bbR}\intl_{\bbR^d}p(s,x,t,y)[\partial_t+\Delta_y^{\al/2}]\phi(t,y)dydt=-\phi(s,x).
\end{equation}
We denote $P(s,x,dt,dy)=p(s,s,t,y)dtdy$, $(L\phi)(s,x)=\partial_t\phi(s,x)+\Delta_y^{\al/2}\phi(s,x)$
 and $\t{P}(s,x,dt,dy)=\t{p}(s,x,t,y)dtdy$.
By (\ref{fund:1}),
$PL\phi=-\phi$.
By \eqref{eq:deftk} and \eqref{eq:ueb}, 
\begin{equation}\label{fund:4}
\t{P}(L+J)\phi=PL\phi+\sum_{n=1}^{\infty}(PJ)^nPL\phi+ \sum_{n=0}^{\infty}(PJ)^{n+1}\phi=-\phi,
\end{equation}
where the series converge absolutely.
This proves (\ref{fundt:1}). We see that the argument is quite general, and hinges only on the convergence of the series (see \cite[Lemma 4 and 5]{MR3000465} for more insight).

We now return to the setting of Theorem~\ref{t2} to illustrate the influence of the perturbation on jump intensity of Markov processes. We consider
$k$ being the transition probability of a L\'evy process $(X_t)_{t\ge 0}$ on $\bbR^d$ (\cite{MR1739520}). Let $\nu(dy)$ be the L\'evy measure, i.e. the jump intensity of $(X_t)$. We have $k(s,x,t,A) =\varrho_{t-s}(A-x)$, where $t>s$ and $\varrho_{t}$ is the distribution of $X_t$.
Let $\mu$ be a finite measure on $\bbR^d$ and $J(s,x,dtdy)=\mu(dy-x)\delta_s(dt)$ for $s<t$. By induction we verify that
$$
k_n(s,x,t,dy)=\frac{(t-s)^n}{n!}\varrho_{t-s}\ast \mu^{\ast n}(dy-x).
$$
Therefore, 
$$
\t{k}(s,x,t,dy)=\varrho_{t-s}\ast \sum_{n=0}^{\infty}\frac{(t-s)^n}{n!} \mu^{\ast n}(dy-x)
$$
cf. \cite{2011arXiv1112.3401C},
and so
\begin{equation}\label{eq:bsg}
e^{-(t-s)|\mu|}\t{k}(s,x,t,dy)
\end{equation}
is the transition probability of a L\'evy process with the L\'evy measure $\nu+\mu$.
Thus, perturbing $k$ by $J$ adds jumps and some mass to $(X_t)$, and perturbing by $-J$ reduces jumps and mass of $(X_t)$, as long as $\nu-\mu$ is nonnegative. This is sometimes called Meyer's procedure of adding/removing jumps in probability literature.

We like to note that subtracting jumps may destroy our (local in time, global in space) comparability of $k$ and $\t{k}^-$. 
Indeed, 
we can make $\nu(dz)-\mu(dz)$ a compactly supported L\'evy measure, whose transition probability has a different, superexponential decay in space (compare \cite[Lemma~2]{MR2794975} and \eqref{property}). This explains the role played by the smallness assumption on $\varepsilon$ in Lemma~\ref{l:con_j} and Corollary~\ref{cor:lupfL}.

{\bf Acknowledgements.} We thank Tomasz Jakubowski for discussions and suggestions.

\end{document}